\DeclareMathOperator{\st}{s.t.}
\DeclareMathOperator{\diag}{diag}
\DeclareMathOperator{\rank}{rank}
\newtheorem{theorem}{Theorem}
\newtheorem{corollary}{Corollary}
\newtheorem{lemma}{Lemma}
\newtheorem{proposition}{Proposition}
\newtheorem{assumption}{Assumption}
\newcommand{\RR}{{\mathbb{R}}}
\newcommand{\SSS}{{\mathbb{S}}}
\title{Exact Semidefinite Formulations for a \\ Class of 
(Random and Non-Random) \\ Nonconvex Quadratic Programs}
\author{%
Samuel Burer%
\thanks{Department of Management Sciences, University of Iowa,
Iowa City, IA, 52242-1994, USA. Email: {\tt samuel-burer@uiowa.edu}.}%
\and
Yinyu Ye%
\thanks{Department of Management Science \& Engineering
and Institute of Computational \& Mathematical Engineering, Stanford University,
Stanford, CA, 94305-4121, USA. Email: {\tt yinyu-ye@stanford.edu}.}
}
\date{February 7, 2018 \\ Revised: February 14, 2018 \\ Revised: November 7, 2018}
\begin{document}

\maketitle

\begin{abstract}
We study a class of quadratically constrained quadratic programs
(QCQPs), called {\em diagonal QCQPs\/}, which contain no off-diagonal
terms $x_j x_k$ for $j \ne k$, and we provide a sufficient condition
on the problem data guaranteeing that the basic Shor semidefinite
relaxation is exact. Our condition complements and refines those already
present in the literature and can be checked in polynomial time. We
then extend our analysis from diagonal QCQPs to general QCQPs, i.e.,
ones with no particular structure. By reformulating a general QCQP
into diagonal form, we establish new, polynomial-time-checkable sufficient conditions for the
semidefinite relaxations of general QCQPs to be exact. Finally, these
ideas are extended to show that a class of random general QCQPs has
exact semidefinite relaxations with high probability as long as the
number of constraints grows no faster than a fixed polynomial in the
number of variables. To the best of our knowledge, this is the first
result establishing the exactness of the semidefinite relaxation for
random general QCQPs.

\mbox{}

\noindent Keywords: quadratically constrained quadratic programming,
semidefinite relaxation, low-rank solutions.
\end{abstract}

\begin{onehalfspace}

\section{Introduction}

We study {\em quadratically constrained quadratic programming\/} (QCQP),
i.e., the minimization of a nonconvex quadratic objective over the intersection
of nonconvex quadratic constraints:
\begin{align}
\min \ \ & x^T C x + 2 c^T x \label{equ:qcqp} \\
\st \ \ & x^T A_i x + 2 a_i^T x \le b_i \ \ \forall \ i = 1, \ldots, m. \nonumber
\end{align}
The variable is $x \in \RR^n$ and the data consist of the symmetric
matrices $\{ C, A_i \}$ and column vectors $\{ c, a_i \}$. QCQPs subsume
a wide variety of NP-hard optimization problems, and hence a reasonable
approach is to approximate them via tractable classes of optimization
problems.

{\em Semidefinite programming\/} (SDP) is one of the most frequently
used tools for approximating QCQPs in polynomial time \cite{Todd_2001, Anjos-Lasserre_2012}. The standard
approach constructs an SDP relaxation of (\ref{equ:qcqp}) by replacing
the rank-1 matrix inequality ${1 \choose x}{1 \choose x}^T \succeq 0$ by
$Y(x,X) \succeq 0$, where
\[
    Y(x,X) := \begin{pmatrix} 1 & x^T \\ x & X \end{pmatrix} \in \SSS^{n+1}
\]
and $\SSS^{n+1}$ denotes the symmetric matrices of size $(n + 1) \times
(n + 1)$. In this paper, we focus on the simplest SDP
relaxation of (\ref{equ:qcqp}), called the Shor relaxation \cite{Shor_1987}:
\begin{align}
\min \ \ & C \bullet X + 2 c^T x \label{equ:psdp} \\
\st \ \ & A_i \bullet X + 2 a_i^T x \le b_i \ \ \forall \ i = 1, \ldots, m \nonumber \\
&Y(x,X) \succeq 0. \nonumber
\end{align}
where $M \bullet N := \mbox{trace}(M^T N)$ is the trace inner product.

\subsection{Rank bounds}

Let $r^*$ be the smallest rank among all optimal solutions $Y^* :=
Y(x^*, X^*)$ of (\ref{equ:psdp}). When $r^* = 1$, the relaxation
(\ref{equ:psdp}) solves (\ref{equ:qcqp}) exactly, and loosely
speaking, $r^*$ is an important measure for understanding the
quality of the SDP relaxation, e.g., a low $r^*$ might allow one to
develop an approximation algorithm for (\ref{equ:qcqp}) by solving
(\ref{equ:psdp}). Furthermore: in many cases the true objective of
interest is to find a low-rank feasible solution of (\ref{equ:psdp})
\cite{Recht-et-al_2010}; and knowing $r^*$, or simply preferring a
smaller rank, can even help with solving (\ref{equ:psdp}) via so-called
{\em low-rank approaches\/} for solving SDPs \cite{Burer-Monteiro_2003}.

We are interested in {\em a priori\/} upper bounds on $r^*$. Pataki
\cite{Pataki_1998}, Barvinok \cite{Barvinok_1995}, and Deza-Laurent
\cite{Deza-Laurent_1997} independently proved that $r^*(r^* + 1)/2 \le
m+1$, or equivalently $r^* \le \lceil \sqrt{2(m+1)} \rceil$.\footnote{In
fact, if the number of inactive linear inequalities at $Y^*$ is known
ahead of time, then this bound can be improved. For example, suppose
(\ref{equ:psdp}) contains the two inequalities $0 \le X_{12} \le 1$.
Then the rank bound can be improved to $\lceil \sqrt{2m} \rceil$ since
both inequalities cannot be active at the same time.} Note that this
result depends neither on $n$ nor on the data of the SDP. In general,
to reduce the bound further, one must exploit the particular structure
and/or data of the instance, and there are many examples in which this
is indeed possible
\cite{Sturm-Zhang_2003,Ye.Zhang.2003a, Sojoudi-Lavaei_2014, Burer_2015}.
For example, one classical result establishes that, if all $C, A_i$ are
positive semidefinite, then $r^* = 1$ is guaranteed \cite{Fujie-Kojima_1997}. 

A recent approach bounds $r^*$ by studying the structure of
the simple, undirected graph $G$ defined by the aggregate nonzero
structure of the matrices
\[
    \tilde A_0 := \begin{pmatrix} 1 & c^T \\ c & C \end{pmatrix}, \
    \tilde A_1 := \begin{pmatrix} 1 & a_1^T \\ a_1 & A_m \end{pmatrix}, \ldots, \
    \tilde A_m := \begin{pmatrix} 1 & a_m^T \\ a_m & A_m \end{pmatrix}.
\]
Specifically, $G := (V, E)$, where $V := \{ 0, 1, \ldots, n \}$ and
\[
    E := \left\{ (j,k) : [\tilde A_i]_{jk} \ne 0 \text{ for some } i \in \{0, 1, \ldots m \} \right\}
\]
Laurent and Varvitsiotis \cite{Laurent-Varvitsiotis_2014} show
in particular that $r^*$ is bounded above by the tree-width
of $G$ plus 1; see \cite{Diestel_2018} for a definition of tree-width. So, for example when $G$ is a tree, $r^*
\le 2$. Similar approaches and extensions can be found in
\cite{Sojoudi-Lavaei_2014,Madani-et-al_2014,Mandani-et-al_2017}. In
fact, \cite{Madani-et-al_2014} proves that any polynomial optimization
problem can be reformulated as a QCQP with a corresponding SDP
relaxation having $r^* \le 2$. This demonstrates that, in a certain
sense, the difference between approximating and solving (\ref{equ:qcqp}) is
precisely the difference between $r^* = 2$ and $r^* = 1$.

In this paper, we study, new sufficient conditions guaranteeing $r^* =
1$, and we
do so in two stages.

First, in Section \ref{sec:rankbound}, we consider a subclass of
QCQPs that we call {\em diagonal QCQPs\/}: each data matrix $C, A_1,
\ldots, A_m$ is diagonal. This means that no cross terms $x_j x_k$
for $j \ne k$ appear in (\ref{equ:qcqp}), and hence each quadratic
function is separable, although the entire problem is not. Under a
linear transformation, this is equivalent to the conditions that all
$C, A_i$ pairwise commute and that all $C, A_i$ share a common basis of
eigenvectors. This subclass is itself NP-hard since it contains, for
example, 0-1 binary integer programs. In addition, in this case, the
aggregate nonzero structure of $G$ is a star, which is a type of tree, with the first $n$ vertices
connected to the $(n+1)$-st vertex, and hence, as discussed above, $r^*
\le 2$ for diagonal QCQPs. A constant approximation algorithm based on
the SDP relaxation was given by \cite{Ye_1999}.

With respect to diagonal QCQPs, our main result provides a sufficient
condition on the data of (\ref{equ:qcqp}) guaranteeing $r^* = 1$.
Independent of the Laurent-Varvitsiotis bound, which is based only
on the graph structure $G$, our approach shows that $r^*$ is bounded
above by $n - f + 1$, where $f$ is a data-dependent integer that can
be computed in a prepocessing step by solving $n$ linear programs
(LPs). Specifically, before solving the relaxation (\ref{equ:psdp}), we
construct and solve $n$ auxiliary LPs using the data of (\ref{equ:qcqp})
to assess the feasibility of $n$ polyhedral systems. The integer $f$ is
the number of those systems, which are feasible, and then we prove
$r^* \le n - f + 1$. Thus the condition $f = n$ implies $r^* = 1$. In
particular, the $j$-th linear system employs the data $C, A_1, \ldots,
A_m$ and $c_j, a_{1j}, \ldots, a_{mj}$ and contains 1 equation,
$m$ inequalities, $n-1$ nonnegative variables, and 2 free variables;
see (\ref{equ:pfeasj}) below. Note that $f$ does not depend on $b$. In
contrast with the Laurent-Varvitsiotis bound, our bound depends both
on the graph structure and the problem data itself. Also, while our
bound $r^* \le n - f + 1$ is not as strong as theirs in general, it
can be stronger in specific cases as we will demonstrate. For example,
we reprove a result from \cite{Sojoudi-Lavaei_2014}, which also
exploits conditions on the data to guarantee $r^* = 1$ for a particular sub-class
of diagonal QCQPs. We also provide an example showing that our
analysis can be stronger than that of \cite{Sojoudi-Lavaei_2014} in
certain cases.

Second, in Section \ref{sec:general}, we study the case of general,
non-diagonal QCQPs by first reducing to the case of diagonal QCQPs.
This is done by a standard introduction of auxiliarly variables that
lifts (\ref{equ:qcqp}) to a higher dimension in which the QCQP is
diagonal.\footnote{Interestingly, compared to \cite{Madani-et-al_2014},
this provides a simple proof that every polynomial optimization problem
has a corresponding SDP relaxation in which $r^* = 2$; see Section \ref{sec:general}.} Then, by
applying the theory for diagonal QCQPs to this lifted QCQP, we obtain
sufficient conditions for the SDP relaxation (\ref{equ:psdp}) of the
original (\ref{equ:qcqp}) to be exact with $r^* = 1$. These sufficient
conditions involve only the eigenvalues of the matrices $C, A_1, \ldots,
A_m$ and do not depend on the vectors $c, a_1, \ldots, a_m, b$.

\subsection{Rank bounds under data randomness}

Our proof techniques in Sections \ref{sec:rankbound} and
\ref{sec:general} reveal an interesting property of the bound $r^* \le
n - f + 1$ mentioned above, namely that it can often be improved by a
simple perturbation of the data of (\ref{equ:qcqp}). In this paper, in addition to
examining data perturbation, we also consider how the rank bound $r^*
\le n - f + 1$ behaves under random-data models. Our interest in this
subject arises from the fact that optimization algorithms have recently
been applied to solve problems for which data are random, often because
data themselves contain randomness in a big-data environment or are
randomly sampled from large populations.

It has been shown that data randomness typically makes algorithms
run faster in the so-called {\em average behavior analysis\/}.
The idea is to obtain rigorous probabilistic bounds on the number
of iterations required by an algorithm to reach some termination
criterion when the algorithm is applied to a random instance of a
problem drawn from some probability distribution. In the case of
the simplex method for LP, average-case analyses have provided some
theoretical justification for the observed practical efficiency of
the method, despite its exponential worst-case bound; see for example
\cite{Adler-Megiddo_1985, Borgwardt_1987, Smale_1983, Todd_1986}.

In the case of interior-point algorithms for LP, a ``high probability"
bound of $O(\sqrt{n}\ln n)$ iterations for termination (independent of
the data size) has been proved using a variety of algorithms applied to
several different probabilistic models. Here, $n$ is the dimension or
number of variables in a standard form problem, and ``high probability"
means the probability of the bound holding goes to 1 as $n\to
\infty$; see, e.g., \cite{Ye_1994, Anstreicher-et-al_1999}. The paper
\cite{Todd-et-al_2001} analyzed a condition number of the constraint
matrix $A$ of dimension $m\times n$ for an interior-point LP algorithm
and showed that, if $A$ is a standard Gaussian matrix, then the expected
condition number equals $O(\min\{m\ln n, n\})$. Consequently, the
algorithm terminates in strongly polynomial time in expectation.

On the other hand, specific recovery problems with random
data/sampling have been proved to be exact via convex optimization
approaches, which include digital communication \cite{So_2010},
sensor-network localization \cite{Shamsi-et-al_2013}, PhaseLift
signal recovery \cite{Candes-et-al_2013}, and max-likelihood angular
synchronization \cite{Bandeira-et-al_2017}; see also the survey paper
\cite{Luo-et-al_2010} and references therein. In these approaches,
the recovery problems are relaxed to semidefinite programs (SDPs),
where each randomly sampled measurement becomes a constraint in the
relaxation. When the number of random constraints or measurements is
sufficiently large---$O(n\ln n)$ relative to the dimension $n$ of the
variable matrix---then the relaxation contains the unique solution
to be recovered. However, these problems can actually be solved by
more efficient, deterministic, targeted sampling using only $O(n)$
measurements.

In Section \ref{sec:random}, for general QCQPs, we give further evidence
to show that a nonconvex optimization problem, for which the data are
random and the number of constraints $m$ grows as a fixed polynomial in
the variable dimension $n$, can be globally solved with high probability
via convex optimization, specifically SDP. The proof is based on the
ideas developed in Sections \ref{sec:rankbound} and \ref{sec:general}.

We mention briefly that our approach of analyzing random
problems, i.e., problems generated from a particular probability
distribution, differs from the smoothed-analysis approaches of
papers such as \cite{Spielman-Teng_2004} for LP and
\cite{Bhojanapalli-et-al_2018} for SDP. Smoothed analysis makes
no distributional assumptions and proves good algorithmic behavior or
good problem characteristics on all problems except a set of measure zero and
hence differs from the techniques herein.

\subsection{Assumptions and basic setup}

We make the following assumptions throughout:

\begin{assumption} \label{ass:feas}
The feasible set of (\ref{equ:qcqp}) is nonempty.
\end{assumption}

\begin{assumption} \label{ass:pd}
There exists $\bar y \in \RR^m$ such that $\bar y \le 0$ and $\sum_{i = 1}^m
\bar y_i A_i \prec 0$.
\end{assumption}

\noindent Assumption \ref{ass:pd} could be equivalently stated with
$\bar y \ge 0$ and $\sum_{i=1}^m \bar y_i A_i \succ 0$. However, this 
form with $\bar y \le 0$ will match the SDP dual (\ref{equ:dsdp}) below. Define
\[
    \bar A := \sum_{i=1}^m \bar y_i A_i, \ \
    \bar a := \sum_{i=1}^m \bar y_i a_i.
\] 
Assumptions \ref{ass:feas}--\ref{ass:pd} together imply that
the feasible set of (\ref{equ:qcqp}) is contained within the
full-dimensional ellipsoid defined by $-x^T \bar A x - 2 \bar a^T x \le
-b^T \bar y$ and hence (\ref{equ:qcqp}) has an optimal solution. This also
implies that the feasible set of (\ref{ass:pd}) is bounded due to its
redundant constraint $-\bar A \bullet X - 2 \bar a^T x \le -b^T \bar y$,
which again uses $\bar A \prec 0$. We also assume:

\begin{assumption} \label{ass:int}
The interior feasible set of (\ref{equ:psdp}) is nonempty.
\end{assumption}

\noindent The dual of (\ref{equ:psdp}) is
\begin{align}
\max \ \ & b^T y - \lambda \label{equ:dsdp} \\
\st \ \ \, & y \le 0, \ Z(\lambda, y) \succeq 0 \nonumber
\end{align}
where
\[
    Z(\lambda, y) := \begin{pmatrix} \lambda & s(y)^T \\ s(y) & S(y) \end{pmatrix}
\]
and
\[
    s(y) := c - \sum_{i=1}^m y_i a_i, \ \ \ \ \ \ \  S(y) := C - \sum_{i=1}^m y_i A_i.
\]
Assumption \ref{ass:pd} also implies that the feasible set of
(\ref{equ:dsdp}) has interior, which together with Assumption
\ref{ass:int} ensures that strong duality holds between (\ref{equ:psdp})
and (\ref{equ:dsdp}), i.e., there exist feasible $Y^* := Y(x^*, x^*)$
and $Z^* := Z(\lambda^*, y^*)$ such that $Y^* Z^* = 0$. In particular,
we have $\rank(Y^*) + \rank(Z^*) \le n+1$.

\section{Diagonal QCQPs} \label{sec:rankbound}

In this section, we assume (\ref{equ:qcqp}) is a diagonal QCQP, i.e.,
the matrices $C, A_i$ are diagonal. For any fixed index $1 \le j \le n$,
consider the feasibility system
\begin{equation} \label{equ:dfeasj}
y \le 0, \ \ S(y) \succeq 0, \ \ S(y)_{jj} = 0, \ \ s(y)_j = 0.
\end{equation}
Because $S(y)$ is diagonal, this is in fact a polyhedral system, which
by Farkas' Lemma is feasible if and only if the polyhedral system
\begin{align}
    & C \bullet X + c_j x_j = -1 \label{equ:pfeasj} \\
    & A_i \bullet X + a_{ij} x_j \le 0 \ \ \forall \ i = 1, \ldots, m \nonumber \\
    & X \text{ diagonal}, \ X_{kk} \ge 0 \ \ \forall \ k \ne j \nonumber \\
    & X_{jj} \text{ free}, \ x_j \text{ free} \nonumber
\end{align}
is infeasible. It turns out that systems
(\ref{equ:dfeasj})--(\ref{equ:pfeasj}) are key to understanding the
possible ranks of dual feasible $Z(\lambda, y)$. Define
\[
    f := | \{ j : (\ref{equ:dfeasj}) \text{ is infeasible} \} |
       = | \{ j : (\ref{equ:pfeasj}) \text{ is feasible} \} |.
\]
We call $f$ the {\em feasibility number\/} for (\ref{equ:qcqp}),
although it is important to note that $f$ does not depend on the right-hand side $b$.

\begin{lemma} \label{lem:main}
For any dual feasible $\lambda$, $y$, and $Z := Z(\lambda, y)$, it holds
that $\rank(Z) \ge f$.
\end{lemma}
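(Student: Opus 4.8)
The plan is to exploit the diagonal structure of $S(y)$ together with the definition of $f$. Fix any dual feasible $(\lambda, y, Z)$. Since each $A_i$ is diagonal, $S(y) = C - \sum_i y_i A_i$ is a diagonal matrix with nonnegative entries (by $S(y) \succeq 0$), and $Z = Z(\lambda, y) \succeq 0$ forces, for each index $j$ with $S(y)_{jj} = 0$, that the corresponding off-diagonal entry $s(y)_j$ also vanishes: a PSD matrix with a zero diagonal entry has the entire corresponding row and column equal to zero. Thus, for every $j$ such that $S(y)_{jj} = 0$, the pair $(\lambda, y)$ actually satisfies the feasibility system (\ref{equ:dfeasj}). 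Contrapositively, if (\ref{equ:dfeasj}) is infeasible for a given $j$, then necessarily $S(y)_{jj} > 0$.

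Now let $J := \{ j : (\ref{equ:dfeasj}) \text{ is infeasible} \}$, so $|J| = f$. By the previous paragraph, $S(y)_{jj} > 0$ for every $j \in J$, and since $S(y)$ is diagonal this means the $f$ rows of $S(y)$ indexed by $J$ are linearly independent, i.e. $\rank(S(y)) \ge f$. Because $S(y)$ is a principal submatrix of $Z$, we conclude $\rank(Z) \ge \rank(S(y)) \ge f$, which is the claim.

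The only place requiring care is the passage from ``$S(y)_{jj} = 0$'' to ``$(\lambda, y)$ solves (\ref{equ:dfeasj})'' — i.e. verifying that $s(y)_j = 0$ as well. This is exactly the standard fact that a positive semidefinite matrix with a vanishing diagonal entry has vanishing entries throughout that row and column, applied to $Z(\lambda, y) \succeq 0$ and the entry $s(y)_j$ sitting in the border row/column adjacent to $S(y)_{jj}$. I do not anticipate any genuine obstacle; the argument is essentially a bookkeeping exercise once this PSD fact and the Farkas-based characterization of $f$ (already recorded in the excerpt) are in hand. One should double-check the edge case $f = 0$, where the bound $\rank(Z) \ge 0$ is vacuous.
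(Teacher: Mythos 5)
Your proof is correct and follows essentially the same route as the paper's: show that infeasibility of (\ref{equ:dfeasj}) forces $S(y)_{jj} > 0$, count at least $f$ positive diagonal entries of the diagonal matrix $S(y)$, and conclude via $\rank(Z) \ge \rank(S(y)) \ge f$. The only cosmetic difference is that you justify the key step ``$S(y)_{jj}=0 \Rightarrow s(y)_j=0$'' directly from the fact that a PSD matrix with a zero diagonal entry has zero row and column, whereas the paper splits into the cases $\lambda = 0$ and $\lambda > 0$ (using a Schur complement in the latter); your version avoids that case distinction but is the same argument in substance.
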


\begin{proof}
Define $S := S(y)$ and $s := s(y)$. We first note that $\rank(Z) \ge
\rank(S)$ since $S$ is a principal submatrix of $Z$. If $\lambda =
0$, then $Z \succeq 0$ implies $s = 0$, which in turn implies that
at least $f$ entries of $\diag(S)$ are positive. Hence, $\rank(Z)
\ge \rank(S) \ge f$. If $\lambda > 0$, then the Schur complement $S
- \lambda^{-1} ss^T$ is positive semidefinite; in particular, $s_j =
0$ whenever $S_{jj} = 0$. Hence, the number of positive entries of
$\diag(S)$ is at least $f$, and $\rank(Z) \ge \rank(S) \ge f$.
\end{proof}

Using Lemma \ref{lem:main}, we prove our main result in this section,
which bounds the rank of any optimal $Y^*$ of (\ref{equ:psdp}).

\begin{theorem} \label{the:main}
Let $Y^* := Y(x^*, X^*)$ be any optimal solution of (\ref{equ:psdp}).
It holds that $1 \le \rank(Y^*) \le n - f + 1$.
\end{theorem}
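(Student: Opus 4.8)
The proof combines Lemma \ref{lem:main} with complementary slackness between (\ref{equ:psdp}) and (\ref{equ:dsdp}). The lower bound is immediate: since the $(0,0)$ entry of $Y^*$ equals $1$, we have $Y^* \ne 0$ and hence $\rank(Y^*) \ge 1$.

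For the upper bound, recall that Assumptions \ref{ass:pd} and \ref{ass:int} guarantee strong duality with attainment on both sides, so there exists an optimal solution $Z^* := Z(\lambda^*, y^*)$ of (\ref{equ:dsdp}). The plan is to show that the given optimal $Y^* := Y(x^*, X^*)$ satisfies $Y^* Z^* = 0$. Expanding the trace inner product of the two block matrices gives the identity
\[
    Y^* \bullet Z^* = \lambda^* + \bigl( C \bullet X^* + 2 c^T x^* \bigr) - \sum_{i=1}^m y_i^* \bigl( A_i \bullet X^* + 2 a_i^T x^* \bigr),
\]
and substituting the zero-duality-gap identity $C \bullet X^* + 2 c^T x^* = b^T y^* - \lambda^*$ reduces the right-hand side to $\sum_{i=1}^m y_i^* \bigl( b_i - A_i \bullet X^* - 2 a_i^T x^* \bigr)$. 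Each summand is the product of $y_i^* \le 0$ with a nonnegative primal slack, hence is $\le 0$; on the other hand $Y^* \bullet Z^* \ge 0$ since both matrices are positive semidefinite. Therefore $Y^* \bullet Z^* = 0$, and positive semidefiniteness upgrades this to $Y^* Z^* = 0$. Hence the column space of $Z^*$ lies in the kernel of $Y^*$, which yields $\rank(Y^*) + \rank(Z^*) \le n + 1$.

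Finally, Lemma \ref{lem:main} applied to $Z^*$ gives $\rank(Z^*) \ge f$, so $\rank(Y^*) \le n + 1 - \rank(Z^*) \le n + 1 - f = n - f + 1$. The only point requiring care is that the complementarity $Y^* Z^* = 0$ holds for \emph{every} optimal $Y^*$, not just for one particular primal--dual pair; this is precisely the content of the displayed identity, which uses only primal feasibility of $Y^*$, dual feasibility of $Z^*$, and equality of the two optimal values. I do not anticipate any deeper obstacle.
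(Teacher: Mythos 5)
Your proof is correct and takes essentially the same route as the paper's: rank complementarity between an optimal primal--dual pair, $\rank(Y^*) + \rank(Z^*) \le n+1$, combined with Lemma \ref{lem:main} applied to $Z^*$, plus the trivial lower bound from the top-left entry of $Y^*$. The only difference is that you explicitly verify $Y^* Z^* = 0$ for \emph{every} optimal $Y^*$ via the trace identity and the sign pattern $y^* \le 0$, a detail the paper leaves implicit in its ``as discussed above'' appeal to strong duality under Assumptions \ref{ass:pd} and \ref{ass:int}.
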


\begin{proof}
As discussed above, $\rank(Y^*) + \rank(Z^*) \le n+1$, where $Z^*
:= Z(\lambda^*, y^*)$ is optimal for (\ref{equ:dsdp}). Lemma
\ref{lem:main} guarantees $\rank(Z^*) \ge f$, which implies $\rank(Y^*)
\le n - f + 1$. Also, since $Y^*$ is nonzero due to its top-left entry,
$\rank(Y^*) \ge 1$.
\end{proof}

As mentioned in the Introduction, the Laurent-Varvitsiotis rank bound
is $r^* \le 2$, while Theorem \ref{the:main} ensures $r^* \le n - f +
1$. Sections \ref{sec:homog} and \ref{sec:CI0} below give classes of
examples for which $n - f + 1 = 1 < 2$, i.e., $f = n$ and our bound is
tighter than the Laurent-Varvitsiotis bound, but here we would briefly
like to give an example for which our bound is worse. Consider the
standard binary knapsack problem
\[
    \min \left\{ c^T x : a_1^T x \le b_1, x \in \{0,1\}^n \right\},
\]
where every $c_j < 0$ (since the standard knapsack maximizes with
positive objective coefficients) and every $a_{1j} > 0$. In this case,
using the fact that $x_j \in \{0,1\}$ if and only if $x_j = x_j^2$, the
$j$-th system (\ref{equ:pfeasj}) is
\[
    c_j x_j = -1, \ \ \ 
    a_{1j} x_j \le 0, \ \ \
    X_{kk} = 0 \ \ \forall \ k \ne j, \ \ \
    X_{jj} - x_j = 0
\]
which is clearly infeasible since $c_j x_j = -1$ implies $x_j > 0$,
while $a_{1j} x_j \le 0$ implies $x_j \le 0$. Hence, in this example, $f
= 0$, and our bound is $r^* \le n+1$.

An alternative approach to provide a sufficient condition is to check
the feasibility system
\begin{equation} \label{equ:dfeasmaxj}
y \le 0, \ \ S(y) \succeq 0, \ \ S(y)_{jj} = s(y)_j = 0 \ \forall \ j \in J
\end{equation}
for a fixed index set $J\subset \{1,...,n\}$. Again, because $S(y)$ is diagonal, this is in fact a polyhedral system. Then we have
\begin{corollary}
Let $Y^* := Y(x^*, X^*)$ be any optimal solution of (\ref{equ:psdp}).
It holds that $1\le \rank(Y^*) \le j^*$ where $j^*$ is the smallest-cardinality such that all 
systems (\ref{equ:dfeasmaxj}) with $|J| = j^*$ are infeasible.
\end{corollary}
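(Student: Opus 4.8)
The plan is to mimic the proofs of Lemma~\ref{lem:main} and Theorem~\ref{the:main}, with the singleton index $\{j\}$ replaced by the full ``zero set'' of the dual slack. Concretely, I would fix an arbitrary dual feasible $(\lambda,y)$, set $S := S(y)$, $s := s(y)$, $Z := Z(\lambda,y)$, and let $J_0 := \{ j : S_{jj} = 0 \}$. Since $C$ and the $A_i$ are diagonal, $S$ is diagonal, and $Z \succeq 0$ forces $\lambda \ge 0$ and $S \succeq 0$; hence every diagonal entry of $S$ outside $J_0$ is strictly positive, so $\rank(S) = n - |J_0|$, and, as in Lemma~\ref{lem:main}, $\rank(Z) \ge \rank(S)$ because $S$ is a principal submatrix of $Z$.

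The key step is to show that $y$ itself is feasible for system~(\ref{equ:dfeasmaxj}) with $J = J_0$. We already have $y \le 0$, $S \succeq 0$, and $S_{jj} = 0$ for $j \in J_0$, so it remains to check that $s_j = 0$ for every $j \in J_0$. This is precisely the dichotomy from Lemma~\ref{lem:main}: if $\lambda = 0$, then $Z \succeq 0$ forces $s = 0$; if $\lambda > 0$, then the Schur complement $S - \lambda^{-1} s s^T \succeq 0$ has nonnegative $j$-th diagonal entry $S_{jj} - \lambda^{-1} s_j^2$, which for $j \in J_0$ reads $-\lambda^{-1} s_j^2 \ge 0$ and forces $s_j = 0$. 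Either way, $y$ solves~(\ref{equ:dfeasmaxj}) with $J = J_0$.

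Next I would invoke monotonicity of~(\ref{equ:dfeasmaxj}) in $J$: enlarging $J$ only adds equality constraints, so feasibility of the system for an index set $J$ implies feasibility for every subset $J' \subseteq J$. Consequently, if $|J_0| \ge j^*$, then restricting to any $J' \subseteq J_0$ with $|J'| = j^*$ exhibits a feasible instance of~(\ref{equ:dfeasmaxj}) of cardinality $j^*$, contradicting the definition of $j^*$. Hence $|J_0| \le j^* - 1$, and therefore $\rank(Z) \ge \rank(S) = n - |J_0| \ge n - j^* + 1$ for every dual feasible $Z$.

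Finally, I would apply this to an optimal dual solution $Z^* := Z(\lambda^*, y^*)$ (which exists by strong duality). From $\rank(Y^*) + \rank(Z^*) \le n+1$ we obtain $\rank(Y^*) \le (n+1) - (n - j^* + 1) = j^*$, while $\rank(Y^*) \ge 1$ holds since the top-left entry of $Y^*$ equals $1$. The only point requiring care is the monotonicity argument --- keeping straight that adding indices to $J$ shrinks, rather than enlarges, the feasible set --- whereas the rest is a verbatim reuse of Lemma~\ref{lem:main} together with the complementary-slackness rank bound, so I do not anticipate a genuine obstacle.
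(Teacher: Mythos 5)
Your proof is correct and follows essentially the same route the paper intends: the paper states the corollary without a separate formal proof, but the reasoning sketched immediately after it (zero diagonal entries of $S(y)$ force the corresponding $s(y)_j$ to vanish exactly as in the dichotomy of Lemma~\ref{lem:main}, so infeasibility of all systems (\ref{equ:dfeasmaxj}) with $|J|=j^*$ caps the zero set at $j^*-1$, giving $\rank(Z)\ge \rank(S)\ge n-j^*+1$ and hence $\rank(Y^*)\le j^*$ via $\rank(Y^*)+\rank(Z^*)\le n+1$) is precisely your argument, including the monotonicity-in-$J$ step you flag.
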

\noindent For example, suppose (\ref{equ:dfeasmaxj}) is infeasible for
all $J$ of size 2. This means that $S(y)$ must have $n-1$ positive
entries, in which case $\rank(Z(y)) \ge \rank(S(y)) \ge n-1$, in
which case $\rank(Y^*) \le n + 1 - (n-1) = 2$. This condition could
be stronger than the bound given by Theorem \ref{the:main} (since the
quantities of multiple indices need to be $0$ at the same time), but it
needs to solve a larger collection of linear programs.

\subsection{The convex case and a perturbation} \label{sec:convex}

As a first application of Theorem \ref{the:main}, we reprove the
classical result---for the case of diagonal QCQPs---mentioned in the
Introduction that the minimum rank $r^*$ equals 1 when (\ref{equ:qcqp})
is a convex program. Of course, Proposition \ref{pro:convex} below holds even
when $C, A_i$ are general positive semidefinite matrices, not just
diagonal ones (see \cite{Fujie-Kojima_1997} for example), but the theory
of this section only applies directly to the diagonal case. (Section
\ref{sec:general} will generalize this result further.)

\begin{proposition} \label{pro:convex}
In the diagonal-QCQP case, suppose (\ref{equ:qcqp}) satisfies $C \succeq 0$ and
$A_i \succeq 0$ for all $i = 1, \ldots, m$. Then there exists an optimal solution
$Y^* := Y(x^*, X^*)$ of (\ref{equ:psdp}) with $\rank(Y^*) = 1$.
\end{proposition}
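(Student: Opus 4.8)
===PROOF PROPOSAL===

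The plan is to apply Theorem~\ref{the:main}, so it suffices to show that the feasibility number satisfies $f = n$; equivalently, that the diagonal polyhedral system (\ref{equ:dfeasj}) is infeasible for every index $j$, or that each system (\ref{equ:pfeasj}) is feasible. I would work with the primal form (\ref{equ:pfeasj}) since it is the one we want to exhibit a solution for. Fix $j$. The idea is that positive semidefiniteness of the data matrices makes $X \bullet A_i$ controllable by the sign and size of the single free entry $X_{jj}$, while the nonnegative diagonal entries $X_{kk}$ for $k \ne j$ can simply be set to zero.

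Concretely, I would try the candidate point $X = \beta\, e_j e_j^T$ (so $X_{jj} = \beta$, all other entries zero) and $x_j = \gamma$, with $\beta, \gamma$ scalars to be chosen. Then (\ref{equ:pfeasj}) reduces to: $C_{jj}\beta + c_j\gamma = -1$ and $A_{i,jj}\beta + a_{ij}\gamma \le 0$ for all $i$. Since $C \succeq 0$ and each $A_i \succeq 0$ are diagonal, we have $C_{jj} \ge 0$ and $A_{i,jj} \ge 0$. The natural move is to take $\beta < 0$ large in magnitude and $\gamma$ small, using the freedom in both variables. If $C_{jj} > 0$, set $\gamma = 0$ and $\beta = -1/C_{jj} < 0$; then the equality holds and each inequality reads $A_{i,jj}\beta \le 0$, which is true because $A_{i,jj} \ge 0$ and $\beta < 0$. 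If $C_{jj} = 0$, then we need $c_j \gamma = -1$, forcing $c_j \ne 0$ and $\gamma = -1/c_j$; then choose $\beta$ negative with large enough magnitude so that $A_{i,jj}\beta + a_{ij}\gamma \le 0$ for every $i$ (possible since $A_{i,jj} \ge 0$, so decreasing $\beta$ only helps each inequality). The one remaining gap is the subcase $C_{jj} = c_j = 0$: then the equality $0 = -1$ is infeasible with this ansatz. In that case, I would argue that the index $j$ can effectively be dropped or handled separately — if $C_{jj} = c_j = 0$, the variable $x_j$ contributes nothing to the objective, and one should instead observe that $S(y)_{jj} = C_{jj} - \sum y_i A_{i,jj} = -\sum y_i A_{i,jj} \ge 0$ automatically (since $y \le 0$, $A_{i,jj} \ge 0$), so the relevant dual system question changes; alternatively one notes directly that such an instance still admits a rank-one optimal solution because the objective and the binding structure do not involve $x_j$. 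I would need to check that the theorem's hypotheses (Assumptions~\ref{ass:feas}--\ref{ass:pd}) still let us conclude, perhaps by a limiting/perturbation argument of the kind promised in the section title.

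The main obstacle I anticipate is exactly this degenerate subcase $C_{jj} = c_j = 0$, where the simple rank-one ansatz for (\ref{equ:pfeasj}) breaks down and $f$ may be strictly less than $n$. The cleanest fix is probably the perturbation idea: replace $C$ by $C + \varepsilon I$ (still PSD, still diagonal), which makes every $C_{jj} = \varepsilon > 0$, hence $f = n$ and $r^* = 1$ for the perturbed problem by Theorem~\ref{the:main}; then let $\varepsilon \downarrow 0$ and use compactness of the feasible set (guaranteed by Assumptions~\ref{ass:feas}--\ref{ass:pd}) together with closedness of the rank-$\le 1$ PSD cone intersected with the feasible region to extract a rank-one optimal solution of the original (\ref{equ:psdp}). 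I would also double-check that the perturbed objective's optimal value converges to the original one, which follows from boundedness of the feasible set and continuity of the linear objective in the data. This perturbation route has the advantage of directly realizing the ``perturbation'' theme flagged in the subsection heading, so I expect the authors' proof to go this way.
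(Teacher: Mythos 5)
Your proposal is correct and follows essentially the same route as the paper: for $C \succ 0$ you exhibit the same feasible point for (\ref{equ:pfeasj}) (set $x_j = 0$, $X_{kk} = 0$ for $k \ne j$, $X_{jj} = -1/C_{jj}$, using $[A_i]_{jj} \ge 0$) to get $f = n$, and for the degenerate case you perturb $C$ by a small positive diagonal and pass to the limit using boundedness of the SDP feasible set, exactly as the authors do. The only shaky intermediate remark---handling $C_{jj} = 0$, $c_j \ne 0$ by taking $\beta$ very negative, which fails when $[A_i]_{jj} = 0$ but $a_{ij}\gamma > 0$---is immaterial, since the perturbation argument you ultimately adopt covers all $C_{jj} = 0$ cases.
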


\begin{proof}
Let us first consider the subcase $C \succ 0$, i.e., $C_{jj} > 0$ for
all $j$. Each of the $n$ linear systems (\ref{equ:pfeasj}) has the
form
\begin{align*}
    &C \bullet X + c_j x_j = -1 \\
    &A_i \bullet X + a_{ij} x_j \le 0 \ \ \forall \ i = 1, \ldots, m
\end{align*}
where $X_{jj}$ and $x_j$ are free, while the remaining variables in
the diagonal $X$ are nonnegative. By setting $x_j = 0$ and $X_{kk}
= 0$ for all $k \ne j$, the system reduces to $C_{jj} X_{jj} = -1$
and $[A_i]_{jj} X_{jj} \le 0$ for all $i$. Then taking $X_{jj} =
-C_{jj}^{-1} < 0$ and using the fact that every $[A_i]_{jj} \ge 0$, we
see that each system is feasible. It follows that $f = n$, and so $r^* =
1$ by Theorem \ref{the:main}.

Now consider the case when some $C_{jj} = 0$. Perturbing $C$ to
$C + D$, where $D$ is a small, positive diagonal matrix, we can
apply the previous paragraph to prove that the SDP relaxation of the
perturbed problem has $r^* = 1$. Now, to complete the proof, we let
$D \to 0$. Note that the perturbation only affects the
objective, and hence we obtain a sequence $\{ Y^* \}$ of rank-1
matrices, each of which is feasible for (\ref{equ:psdp}) and optimal for
its corresponding perturbed SDP. The sequence is also bounded because
the feasible set of (\ref{equ:psdp}) is bounded. Thus, there exists a limit point $\bar
Y$, which is optimal for (\ref{equ:psdp}) and has rank 1. This proves
$r^* = 1$ as desired.
\end{proof}

The proof of Proposition \ref{pro:convex} relies on a perturbation idea
that we will use several times below. The basic insight is that the
feasibility number $f$ can increase under slight perturbations of the
data of (\ref{equ:qcqp}), which means that a nearby SDP relaxation might
enjoy a smaller rank. By letting the perturbation go to 0, we can ensure
that the original SDP contains at least one optimal solution with rank
smaller than could otherwise be guaranteed by a direct application of
Theorem \ref{the:main}.

\subsection{Sign-Definite Linear Terms} \label{sec:homog}

We next reprove a result of \cite{Sojoudi-Lavaei_2014}, tailored to our
diagonal case, that $r^* = 1$ when, for every $j$, the coefficients
$c_j, a_{1j}, \ldots, a_{mj}$ are all nonnegative or all
nonpositive. In such a case, the coefficients are said to be {\em
sign-definite}.


\begin{lemma} \label{lem:nonzerocjzeroaij}
Given $1 \le j \le n$, suppose $c_j \ne 0$ and $a_{1j}, \ldots, a_{mj}$
are sign-definite. Then (\ref{equ:pfeasj}) is feasible.
\end{lemma}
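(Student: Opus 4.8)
The plan is to exhibit, directly, a single feasible point of (\ref{equ:pfeasj}). The only coupling among the variables in (\ref{equ:pfeasj}) is through the equation $C \bullet X + c_j x_j = -1$ and the $m$ inequalities, and in the equation the coefficient of $x_j$ is $c_j$, which is nonzero by hypothesis. This makes it natural to try to carry the equation entirely with $x_j$ and to switch off everything else: take $X := 0$ and $x_j := -1/c_j$. This point respects the variable constraints of (\ref{equ:pfeasj}), since $X_{jj}$ and $x_j$ are free while the remaining diagonal entries $X_{kk}$ with $k \ne j$ equal $0$ and hence are nonnegative.

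It then remains to verify the equation and the $m$ inequalities, and both checks are one line. For the equation, $C \bullet X + c_j x_j = c_j \cdot (-1/c_j) = -1$, so it holds exactly and no rescaling is needed. For the $i$-th inequality, $X = 0$ collapses the left-hand side to $a_{ij} x_j = -a_{ij}/c_j$, so what must be checked is $a_{ij}/c_j \ge 0$ for every $i$. This is precisely where the hypotheses enter: because $c_j \ne 0$ and the coefficients $c_j, a_{1j}, \ldots, a_{mj}$ are sign-definite, the nonzero number $c_j$ fixes the common (weak) sign of the tuple, so each $a_{ij}$ is either $0$ or has the same sign as $c_j$; in either case $a_{ij}/c_j \ge 0$. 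Hence all $m$ inequalities are satisfied and $(X, x_j)$ is feasible for (\ref{equ:pfeasj}).

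I do not expect a genuinely hard step once the candidate point is written down; the only thing to be careful about is applying the sign reasoning to the right quantity. The argument uses $a_{ij} c_j \ge 0$ for all $i$, so what is really needed is that $c_j$ together with $a_{1j}, \ldots, a_{mj}$ be jointly sign-definite (with $c_j \ne 0$). If instead $c_j$ and some $a_{ij}$ had opposite signs, then forcing $X = 0$ would make the $i$-th inequality read $-a_{ij}/c_j > 0$, and one would be driven to rescue feasibility using $X_{jj}$ together with the diagonal entries $[A_i]_{jj}$ --- a route that simple one-variable instances show need not succeed. So I expect the construction $X = 0$, $x_j = -1/c_j$ to be essentially the entire proof.
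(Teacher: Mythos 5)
Your construction is exactly the paper's own proof: take $X = 0$ and $x_j = -c_j^{-1}$, check the equation directly, and use sign-definiteness to see that each $a_{ij} x_j \le 0$. Your side remark is also consistent with the paper's intent, since its notion of sign-definiteness is defined jointly for the tuple $c_j, a_{1j}, \ldots, a_{mj}$, which is precisely what makes $a_{ij}$ and $x_j$ have opposite (weak) signs.
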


\begin{proof}
Take $X = 0$ and $x_j = -c_j^{-1}$. Then the equation $C \bullet X + c_j
x_j = -1$ is satisfied, and the inequalities $A_i \bullet X + a_{ij} x_j
\le 0$ are satisfied becaues $a_{ij}$ and $x_j$ have opposite signs.
\end{proof}

\begin{proposition}[see also \cite{Sojoudi-Lavaei_2014}] \label{pro:homogeneous}
In the diagonal-QCQP case, suppose (\ref{equ:qcqp}) has the property that, for all $j =
1,\ldots,n$, $c_j$ and $a_{ij}$ for all $i = 1, \ldots, m$ are
sign-definite. Then there exists an optimal solution $Y^* := Y(x^*,
X^*)$ of (\ref{equ:psdp}) with $\rank(Y^*) = 1$.
\end{proposition}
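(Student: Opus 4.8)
The plan is to combine Lemma \ref{lem:nonzerocjzeroaij} with the perturbation technique already used in the proof of Proposition \ref{pro:convex}. First I would dispose of the easy subcase in which every $c_j \neq 0$: here Lemma \ref{lem:nonzerocjzeroaij} applies to each index $j$, so each of the $n$ systems (\ref{equ:pfeasj}) is feasible, hence $f = n$, and Theorem \ref{the:main} gives $\rank(Y^*) \le n - f + 1 = 1$ for every optimal $Y^*$; since the top-left entry of $Y^*$ equals $1$, in fact $\rank(Y^*) = 1$.

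For the general case the only obstruction is an index $j$ with $c_j = 0$, where Lemma \ref{lem:nonzerocjzeroaij} does not apply directly. I would perturb $c$ to a vector $c^{\varepsilon}$ that agrees with $c$ wherever $c_j \neq 0$ and, for each $j$ with $c_j = 0$, sets $c^{\varepsilon}_j := \varepsilon \sigma_j$, where $\varepsilon > 0$ is small and $\sigma_j \in \{+1,-1\}$ is chosen to match the common sign of $a_{1j}, \ldots, a_{mj}$ (either sign if these are all zero). By construction, for every $j$ the numbers $c^{\varepsilon}_j, a_{1j}, \ldots, a_{mj}$ remain sign-definite and $c^{\varepsilon}_j \neq 0$, so the subcase already handled applies to the perturbed diagonal QCQP; note that changing $c$ leaves Assumptions \ref{ass:feas}--\ref{ass:int} intact, since $c$ enters only the objective. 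Hence the perturbed SDP relaxation has a rank-$1$ optimal solution $Y^{\varepsilon} := Y(x^{\varepsilon}, X^{\varepsilon})$.

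Finally I would let $\varepsilon \downarrow 0$. Because the perturbation affects only the objective, every $Y^{\varepsilon}$ is feasible for the original relaxation (\ref{equ:psdp}), whose feasible set is bounded under Assumptions \ref{ass:feas}--\ref{ass:pd}; so some subsequence $Y^{\varepsilon_k}$ converges to a feasible $\bar Y$, with $\rank(\bar Y) \ge 1$ because its top-left entry is $1$ and $\rank(\bar Y) \le 1$ by lower semicontinuity of rank, i.e.\ $\rank(\bar Y) = 1$. It then remains to argue that $\bar Y$ is optimal for (\ref{equ:psdp}): the optimal value of the perturbed SDP differs from the original objective evaluated at the same point by $O(\varepsilon_k)$ uniformly over the (bounded) common feasible set, and the optimal value of a linear SDP over a fixed compact feasible set depends continuously on the objective, so passing to the limit shows $\bar Y$ attains the optimal value of (\ref{equ:psdp}). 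I expect the only genuinely delicate point to be the choice of the perturbation direction $\sigma_j$ that preserves sign-definiteness at the zero coordinates of $c$; the rest mirrors the argument of Proposition \ref{pro:convex}.
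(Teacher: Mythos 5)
Your proof is correct and follows essentially the same route as the paper: handle the case $c_j \neq 0$ for all $j$ via Lemma \ref{lem:nonzerocjzeroaij} and Theorem \ref{the:main}, then perturb the zero entries of $c$ in a sign-preserving direction and pass to the limit as in Proposition \ref{pro:convex}. Your added details on the limit (lower semicontinuity of rank and continuity of the optimal value over the bounded feasible set) are exactly the argument the paper leaves implicit.
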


\begin{proof}
We consider two subcases. First, when $c_j \ne 0$ for all $j =
1, \ldots, n$, by Lemma \ref{lem:nonzerocjzeroaij} and Theorem
\ref{the:main}, we have $\rank(Y^*) = 1$. When some $c_j = 0$, choose a
fixed $d \in \RR^n$ such that $d_j \ne 0$ for all $j$ with $c_j =
0$, $d_j = 0$ otherwise, and the sign-definite property is maintained.
Also choose $\epsilon > 0$ and perturb $c$ to $c + \epsilon d$, which
in particular does not change the feasible set of (\ref{equ:qcqp}). By
the previous case, the perturbed SDP relaxation has a rank-1 optimal
solution. By letting $\epsilon \to 0$ and using an argument similar
to the proof of Proposition \ref{pro:convex}, we conclude that the
unperturbed (\ref{equ:psdp}) also has a rank-1 optimal solution.
\end{proof}

The diagonal assumption in Proposition \ref{pro:homogeneous}
is necessary because Burer and Anstreicher \cite{Burer-Anstreicher_2013}
provide an example in which $m = 2$, $C$ is non-diagonal, $A_1, A_2$
are diagonal, $c \ne 0$, $a_1 = a_2 = 0$, and the Shor relaxation is
not tight; in particular, it has no optimal solution with rank 
1. On the other hand, the diagonal assumption can at least be relaxed when $m =
2$ for the purely homogeneous case: Ye and Zhang \cite{Ye.Zhang.2003a}
showed that, if $m = 2$ with $C, A_1, A_2$ arbitrary and $c = a_1 =
a_2 = 0$, then the corresponding Shor relaxation has a rank-1 optimal
solution.

An interesting application of Proposition \ref{pro:homogeneous} occurs
for the feasible set
\begin{equation} \label{equ:balls}
    \left\{ x \ : \ \| x \|_2 \le \rho_1, \ \| x \|_\infty \le \rho_2 \right\}
    = \left\{ x \ : \ x^T x \le \rho_1^2, \ x_j^2 \le \rho_2^2 \ \forall \ j \right\}
\end{equation}
which is the intersection of concentric $2$-norm and $\infty$-norm
balls. It is well known that, for only the 2-norm ball $\{x : x^T x \le \rho_1^2
\}$, problem (\ref{equ:qcqp}) is equivalent to the trust-region
subproblem, which can be solved in polynomial time. On the other hand,
for only the $\infty$-norm ball $\{ x : x_j^2 \le \rho_2^2 \ \forall \ j \}$, problem (\ref{equ:qcqp}) is clearly separable and hence solvable
in polynomial time. Proposition \ref{pro:homogeneous} shows
that (\ref{equ:qcqp}) over the intersection (\ref{equ:balls}) can also
be solved in polynomial-time.

According to theorem 2 of \cite{Yang-et-al_2017}, the fact that
(\ref{equ:psdp}) solves (\ref{equ:qcqp}) when the sign-definiteness
property holds also allows us to relate the feasible set of
(\ref{equ:psdp}) to the closed convex hull
\[
    {\cal K} := \overline{\text{conv}}\left\{ (x, x \circ x) :
    x \text{ feasible for (\ref{equ:qcqp})} \right\}
\]
where $\circ$ denotes the Hadamard, i.e., component-wise, product of
vectors. Such convex hulls are important for studying QCQPs in general.
Specifically, we know that
\[
    {\cal K} = \{ (x, \diag(X)) : (x,X) \text{ is feasible for (\ref{equ:psdp})} \}
\]
when sign-definiteness holds. In this sense, problem (\ref{equ:qcqp}) is
a ``hidden convex'' problem in this case.

\subsection{Arbitrary $C$ and each $A_i \in \{\pm I, 0\}$} \label{sec:CI0}

As mentioned above, Proposition \ref{pro:homogeneous} of the previous
subsection was first proved in \cite{Sojoudi-Lavaei_2014}, and
it involves only conditions on the data $c_j$ and $a_{ij}$ of
the linear terms in (\ref{equ:qcqp}). In fact, the authors of
\cite{Sojoudi-Lavaei_2014} provide a broader theory, one that studies
more general nonzero structures---not just diagonal---but one that
only considers data corresponding to off-diagonal terms $X_{ij}$ in
the SDP relaxations. In particular, they do not consider data such as
$C_{jj}$ and $[A_i]_{jj}$. This is indeed a key difference of our theory
compared to theirs, i.e., our feasibility number $f$ takes into account
the data matrices $C, A_i$. We now give an example to illustrate this
point further---an example in which the sign-definiteness assumption in
Proposition \ref{pro:homogeneous} can be relaxed when $C, A_i$ are taken
into account.

As discussed in the Introduction, the assumption that the matrices
$C, A_1, \ldots, A_m$ are diagonal is equivalent (after a linear
transformation) to the matrices pairwise commuting. When $C$ is
arbitrary and $A_i \in \{\pm I, 0\}$ for all $i$, this assumption
is clearly satisfied. Geometrically, the feasible set is then an
intersection of balls, complements of balls, and half-spaces.
Although this problem is strongly NP-hard in general, Bienstock and
Michalka \cite{Bienstock-Michalka_2014} show that it can be solved in
polynomial-time, for example, when the number of ball constraints is
fixed. More recently, Beck and Pan \cite{Beck-Pan_2017} study precisely
this special case of (\ref{equ:qcqp}) and develop a branch-and-bound
algorithm for its global optimization; \cite{Beck-Pan_2017} also
contains a detailed literature review of this problem.

Assume that the data has already been transformed so that $C$ is
diagonal and, without loss of generality, $C_{11} \ge \cdots \ge
C_{nn}$. In particular, the diagonal of $C$ contains the eigenvalues
of the original $C$. In addition, let us consider the sub-case
in which $c_n$ and $a_{in}$ for all $i$ are sign-definite. (This
is a weaker condition than the sign-definiteness of Proposition
\ref{pro:homogeneous}.) By Theorem \ref{the:main}, the rank of
an optimal solution $Y^*$ of the corresponding Shor relaxation
(\ref{equ:psdp}) is bounded above by $n - f + 1$, where $f$ is the
feasibility number associated with the systems
\begin{align*}
    C \bullet X + c_j x_j &= -1 \\
    \pm I \bullet X + a_{ij} x_j &\le 0 \ \ \ \ \forall \ i \text{ with } A_i = \pm I \\
    a_{ij} x_j &\le 0 \ \ \ \ \forall \ i \text{ with } A_i = 0
\end{align*}
where $X$ is diagonal, $X_{jj}$ and $x_j$ are free, while the remaining
variables $X_{kk}$ are nonnegative. Our next proposition shows that
$f$ equals $n$, so that $r^* = 1$.

\begin{proposition} \label{pro:CI0}
If $C$ is diagonal with $C_{11} \ge \cdots \ge C_{nn}$, $A_i \in \{\pm
I, 0\}$ for all $i = 1,\ldots,m$, and $c_n, a_{1n},\ldots,a_{mn}$
sign-definite, then $r^* = 1$.
\end{proposition}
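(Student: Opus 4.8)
The plan is to apply Theorem \ref{the:main}: it suffices to prove that the feasibility number equals $n$, i.e., that all $n$ systems (\ref{equ:pfeasj}) are feasible, since then every optimal $Y^*$ of (\ref{equ:psdp}) satisfies $1 \le \rank(Y^*) \le n - f + 1 = 1$. The equality $f = n$ need not hold for the given data, so, following the perturbation idea of Proposition \ref{pro:convex}, I would first perturb the \emph{objective} (which changes neither the feasible set of (\ref{equ:qcqp}) nor that of (\ref{equ:psdp}), and hence preserves Assumptions \ref{ass:feas}--\ref{ass:int}), verify $f = n$ for the perturbed instance, and then pass to a limit.

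Concretely, fix $\epsilon > 0$ and replace $C$ by $\hat C := C + \epsilon \Diag(n-1, n-2, \ldots, 1, 0)$, which is still diagonal but now has strictly decreasing entries $\hat C_{11} > \cdots > \hat C_{nn}$; and, if $c_n = 0$, replace $c$ by $\hat c := c + \epsilon \sigma e_n$, where $e_n$ is the $n$-th unit vector and $\sigma \in \{+1,-1\}$ is chosen so that the column $(\hat c_n, a_{1n}, \ldots, a_{mn})$ is still sign-definite and $\hat c_n \ne 0$; this is possible precisely because $(c_n, a_{1n}, \ldots, a_{mn})$ is sign-definite. Both perturbations affect only objective data and vanish as $\epsilon \downarrow 0$.

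For the perturbed instance I would show that every system (\ref{equ:pfeasj}) is feasible. For $j = n$: since $\hat c_n \ne 0$ and $a_{1n}, \ldots, a_{mn}$ are sign-definite, Lemma \ref{lem:nonzerocjzeroaij} applies directly. For $j < n$: put $\delta := \hat C_{jj} - \hat C_{nn} > 0$ and take the diagonal point $x_j = 0$, $X_{jj} = -\delta^{-1}$, $X_{nn} = \delta^{-1}$, and $X_{kk} = 0$ for $k \notin \{j,n\}$. Then $\hat C \bullet X + \hat c_j x_j = \hat C_{jj}(-\delta^{-1}) + \hat C_{nn}\,\delta^{-1} = -1$; since $\mathrm{trace}(X) = 0$ and $x_j = 0$, each constraint arising from $A_i \in \{+I, -I, 0\}$ evaluates to $0 \le 0$; and $X_{kk} \ge 0$ for all $k \ne j$ because $X_{nn} = \delta^{-1} > 0$. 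Hence $f = n$, and Theorem \ref{the:main} gives that every optimal solution of the perturbed (\ref{equ:psdp}) has rank $1$.

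The final step is the limiting argument, identical in spirit to Proposition \ref{pro:convex}: as $\epsilon \downarrow 0$ one obtains rank-$1$ matrices $Y^{(\epsilon)}$, each optimal for the perturbed relaxation and feasible for the unperturbed (\ref{equ:psdp}); the feasible set of (\ref{equ:psdp}) is bounded by Assumptions \ref{ass:feas}--\ref{ass:pd}, so there is a limit point $\bar Y$; it is feasible for (\ref{equ:psdp}), has $\bar Y_{11} = 1$ (so $\rank \bar Y \ge 1$) and $\rank \bar Y \le 1$ since rank cannot increase in the limit, and it is optimal because the perturbed objectives converge uniformly on the bounded feasible set to the original objective. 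Therefore $r^* = 1$. The only non-routine ingredient is the construction for $j < n$: it is what forces the perturbation of $C$ into general position and is where the eigenvalue ordering $C_{11} \ge \cdots \ge C_{nn}$ enters — indeed the trace-zero construction fails for $j = n$, because the relevant gaps $\hat C_{kk} - \hat C_{nn}$ for $k < n$ have the wrong sign, which is exactly why sign-definiteness of column $n$ is assumed, so that $j = n$ can instead be handled via Lemma \ref{lem:nonzerocjzeroaij}.
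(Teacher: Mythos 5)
Your proof is correct and follows essentially the same route as the paper: perturb the objective so that $c_n \ne 0$ and the relevant eigenvalue gap is strict, verify $f = n$ by a trace-zero diagonal $X$ supported on positions $j$ and $n$ for $j < n$ and the sign-definite column for $j = n$, then remove the perturbation by the limiting argument of Proposition \ref{pro:convex}. The only cosmetic differences are that you make all eigenvalue gaps strict (the paper needs only $C_{(n-1)(n-1)} > C_{nn}$) and invoke Lemma \ref{lem:nonzerocjzeroaij} for $j = n$ instead of the paper's equivalent substitution $X_{nn} = -\sum_{k<n} X_{kk}$.
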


\begin{proof}
We first examine the sub-case when $C_{(n-1)(n-1)} > C_{nn}$ and $c_n \ne 0$.
For $j = 1, \ldots, n-1$, consider the system described above the statement
of the proposition. Fixing $x_j = 0$, it reduces
to the system $C \bullet X = -1$, $\pm I \bullet
X \le 0$. Next fixing $X_{jj} = -\sum_{k \ne j} X_{kk}$, the
system further simplifies to
\[
    \sum_{k \ne j} (C_{kk} - C_{jj}) X_{kk} = -1.
\]
We may then take $X_{nn} = (C_{jj} - C_{nn})^{-1}$, which is postive
since $C_{jj} > C_{nn}$, and all other $X_{kk}
= 0$, showing that the system is feasible. Now consider the system for
$j = n$ above. Setting $X_{nn} = -\sum_{k = 1}^{n-1} X_{kk}$, the
system reduces to $c_n x_n = -1$ and $a_{in} x_n \le 0$ for all $i$.
Because $c_n$ and $a_{in}$ are sign-definite and because $c_n \ne 0$, this system is solvable.
It follows that $f = n$ when $C_{(n-1)(n-1)} > C_{nn}$ and $c_n \ne 0$.

Finally, if $C_{(n-1)(n-1)} = C_{nn}$ or $c_n = 0$, then we may make an
arbitrarily small perturbation of the objective such that the previous
paragraph applies. As in the proof of Proposition \ref{pro:convex}, the
perturbation can be removed, thus establishing $r^* = 1$.
\end{proof}

\section{General QCQPs} \label{sec:general}

We now turn our attention to the case of general, non-diagonal QCQPs,
keeping in mind that Assumptions \ref{ass:feas}--\ref{ass:int} still
apply. In particular, the feasible set of (\ref{equ:qcqp}) is bounded,
i.e., it exists in a ball defined by $x^T x \le \rho^2$ for some radius
$\rho$. Note that, for the following development, $\rho$ does not need to
be known explicitly.

We do assume, for simplicity and without loss of generality, that $C$
is diagonal, and we let $A_i = Q_i D_i Q_i^T$ denote the spectral
decomposition of $A_i$, where $Q_i$ is an orthogonal matrix. Next,
we introduce auxiliary variables $y_i = Q_i^T x \in \RR^n$, rewriting
(\ref{equ:qcqp}) as
\begin{align}
\min \ \ & x^T C x + 2 c^T x \label{equ:qcqp'} \\
\st \ \ & y_i^T D_i y_i + 2 a_i^T x \le b_i, \ y_i = Q_i^T x \nonumber \\
& x^T x + \sum_i y_i^T y_i \le (m + 1) \rho^2 \nonumber
\end{align}
where the last constraint is technically redundant but has
been added so that (\ref{equ:qcqp'}) more clearly satisfies Assumptions
\ref{ass:feas}--\ref{ass:int} on its own. In particular, the feasible
set of (\ref{equ:qcqp'}) is bounded.

The lifted problem (\ref{equ:qcqp'}) is a diagonal QCQP, and so the
Laurent-Varvitsiotis bound \cite{Laurent-Varvitsiotis_2014} guarantees
$r^* \le 2$ for the Shor SDP relaxation of (\ref{equ:qcqp'}). As
mentioned in the Introduction, Madani et al.~\cite{Madani-et-al_2014}
have previously shown that every polynomial optimization problem can be
reformulated as a polynomial-sized QCQP, which has an SDP relaxation
with $r^* \le 2$. Because every polynomial optimization problem can
be mechanically converted to a QCQP, which can then be converted to a
diagonal QCQP as above, the Laurent-Varvitsiotis bound of $r^* \le 2$ is
an alternate---and in our opinion, simplified---derivation of the same
result. Interestingly, these results show that boundary between ``easy''
SDP relaxations and ``hard'' polynomial optimization problems lies
between $r^* = 2$ and $r^* = 1$.

We can also apply the theory of Section \ref{sec:rankbound} to (\ref{equ:qcqp'}) since
it is a diagonal QCQP. In particular, we would
like to determine sufficient conditions under which the feasibility
number for (\ref{equ:qcqp'}) equals its total number of variables, which
is $n(m+1)$. We provide just such a condition in the following theorem,
which is an analog of Theorem \ref{the:main}. To this end, we introduce
the following linear system:
\begin{align}
    & C \bullet X = -1 \label{equ:pfeasj'} \\
    & D_i \bullet Y_i \le 0 \ \forall \ i = 1,\ldots,m \nonumber \\
    & I \bullet X + \textstyle{\sum_{i = 1}^m} I \bullet Y_i \le 0 \nonumber \\
    & X, Y_i \text{ diagonal}. \nonumber
\end{align}

\begin{theorem} \label{the:general}
Let $z$ represent any single variable $X_{jj}$ or $[Y_i]_{jj}$ in
(\ref{equ:pfeasj'}). Constrain (\ref{equ:pfeasj'}) further by forcing
all variables other than $z$ to be nonnegative, while keeping $z$ free.
If all such $n(m+1)$ systems corresponding to every possible choice
of $z$ are feasible, then $r^* = 1$ for both (\ref{equ:qcqp'}) and
(\ref{equ:qcqp}).
\end{theorem}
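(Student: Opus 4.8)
The plan is to apply Theorem~\ref{the:main} to the diagonal QCQP~(\ref{equ:qcqp'}) and then push the conclusion back down to~(\ref{equ:qcqp}). Since~(\ref{equ:qcqp'}) has $n(m+1)$ variables and (as noted just after its statement) satisfies Assumptions~\ref{ass:feas}--\ref{ass:int} on its own, Theorem~\ref{the:main} gives $r^* \le n(m+1) - f' + 1$, where $f'$ denotes the feasibility number of~(\ref{equ:qcqp'}); so it suffices to show that the hypothesis forces $f' = n(m+1)$, i.e., that for each of the $n(m+1)$ variable indices of~(\ref{equ:qcqp'}) the corresponding system~(\ref{equ:pfeasj}) is feasible, and to identify that system with the one in the statement.

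To this end I would first record the dual data of the Shor relaxation of~(\ref{equ:qcqp'}): its matrix $S(\cdot)$ is block-diagonal with $x$-block $C - \mu_0 I$ and $y_i$-block $-\mu_i D_i - \mu_0 I$, where $\mu_0 \le 0$ is the multiplier of the redundant ball constraint and each $\mu_i \le 0$ is the multiplier of a $D_i$-constraint, while the equality constraints $y_i = Q_i^T x$ carry \emph{free} multipliers $\nu_i$ that do not enter $S(\cdot)$ at all. The crucial point is that, because each $Q_i$ is invertible, for any fixed index $j$ and any $\mu$ one can choose the $\nu_i$ so that the single entry $s(\cdot)_j$ vanishes; hence system~(\ref{equ:dfeasj}) for~(\ref{equ:qcqp'}) at index $j$ is feasible if and only if the purely $\mu$-dependent polyhedral system $\{\,\mu \le 0,\ S(\mu)\succeq 0,\ S(\mu)_{jj}=0\,\}$ is feasible. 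Writing $S(\mu)\succeq 0$ as the componentwise inequalities $C_{kk}-\mu_0 \ge 0$ and $-\mu_i[D_i]_{kk}-\mu_0\ge 0$ and then applying Farkas' Lemma and the normalization exactly as in the passage from~(\ref{equ:dfeasj}) to~(\ref{equ:pfeasj}), the alternative of this $\mu$-system is seen to be precisely~(\ref{equ:pfeasj'}) with one free variable: the multipliers of the $C_{kk}$- and $[D_i]_{kk}$-rows are the diagonal entries $X_{kk}$ and $[Y_i]_{kk}$, the multiplier of the single row turned into an equality---namely the row indexed by the chosen $j$---is the free variable $z$, and the multiplier of the row ``$\mu_0\le 0$'' is the slack in $I\bullet X+\sum_i I\bullet Y_i\le 0$. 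No ``$c_jx_j$''-type linear term survives, precisely because $s(\cdot)_j=0$ was absorbed using the free $\nu_i$. Thus $f'$ equals the number of choices of $z$ for which the theorem's system is feasible, the hypothesis gives $f'=n(m+1)$, and Theorem~\ref{the:main} yields $r^*=1$ for~(\ref{equ:qcqp'}).

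It remains to transfer exactness to~(\ref{equ:qcqp}). A rank-one optimal solution of the Shor relaxation of~(\ref{equ:qcqp'}) equals $vv^T$ for $v=(1,x^*,y_1^*,\ldots,y_m^*)$, so $(x^*,y^*)$ is feasible---hence optimal---for~(\ref{equ:qcqp'}); consequently $x^*$ is optimal for~(\ref{equ:qcqp}) and $(x^*,x^*x^{*T})$ is feasible for~(\ref{equ:psdp}) with objective value equal to the common optimal value of~(\ref{equ:qcqp}) and~(\ref{equ:qcqp'}). To see that $(x^*,x^*x^{*T})$ is actually \emph{optimal} for~(\ref{equ:psdp}), I would show that~(\ref{equ:psdp}) has this same optimal value, using the lifting map $(x,X)\mapsto\big((x,Q_1^Tx,\ldots,Q_m^Tx),\,TXT^T\big)$, where $T$ has row blocks $I,Q_1^T,\ldots,Q_m^T$: it sends a feasible point of~(\ref{equ:psdp}) to a feasible point of the relaxation of~(\ref{equ:qcqp'}) with the same objective value as long as $I\bullet X\le\rho^2$, so that the optimal value of the latter relaxation is at most that of~(\ref{equ:psdp}), and combined with the trivial relaxation inequalities this forces equality. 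Since Assumptions~\ref{ass:feas}--\ref{ass:pd} make the feasible set of~(\ref{equ:psdp}) bounded, we may fix $\rho$ at the outset large enough that $I\bullet X\le\rho^2$ holds on all of it---which in particular also makes $\|x\|_2^2\le\rho^2$ valid on the feasible set of~(\ref{equ:qcqp}), so that the reformulation~(\ref{equ:qcqp'}) is legitimate.

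I expect the main obstacle to be the bookkeeping in the middle paragraph: matching, as $j$ runs over the $x$- and the $y_i$-blocks, each dual multiplier of the $\mu$-system to a diagonal entry $X_{kk}$ or $[Y_i]_{kk}$, and verifying that the free equality-multipliers $\nu_i$ really do permit dropping the constraint $s(\cdot)_j=0$ for every index $j$. That is the one step which does not reduce to a rote invocation of the diagonal theory of Section~\ref{sec:rankbound}.
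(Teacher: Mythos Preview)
Your proposal is correct, and for the transfer step it spells out explicitly what the paper compresses into a single sentence (``the SDP relaxation for (\ref{equ:qcqp}) is at least as strong as the corresponding relaxation for (\ref{equ:qcqp'})''); your lifting map $T$ and the accompanying choice of $\rho$ are exactly what is behind that assertion.

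Where you genuinely diverge from the paper is in the first step. The paper argues on the \emph{primal} side: the $n(m+1)$ systems~(\ref{equ:pfeasj'}) are nothing but the systems~(\ref{equ:pfeasj}) written for~(\ref{equ:qcqp'}) with the extra scalar variable (the ``linear part'' $x_j$) set to zero; hence feasibility of~(\ref{equ:pfeasj'}) immediately implies feasibility of~(\ref{equ:pfeasj}), so $f'=n(m+1)$ and Theorem~\ref{the:main} applies. You instead argue on the \emph{dual} side: because the equality constraints $y_i=Q_i^Tx$ carry free multipliers $\nu_i$ that enter $s(\cdot)$ but not $S(\cdot)$, the condition $s(\cdot)_j=0$ in~(\ref{equ:dfeasj}) can always be met, so~(\ref{equ:dfeasj}) for~(\ref{equ:qcqp'}) collapses to the $\mu$-only system, whose Farkas alternative is precisely~(\ref{equ:pfeasj'}). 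Your route thus proves the stronger fact that~(\ref{equ:pfeasj'}) and~(\ref{equ:pfeasj}) are \emph{equivalent} for~(\ref{equ:qcqp'}), not merely that one restricts the other---which is interesting but unnecessary for the theorem. The paper's primal observation is a one-liner and avoids the bookkeeping you flag as the main obstacle; your dual argument, while heavier, makes transparent \emph{why} the linear data $c,a_i$ and the matrices $Q_i$ disappear from~(\ref{equ:pfeasj'}).
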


\begin{proof}
The $n(m+1)$ systems (\ref{equ:pfeasj'}) constitute the systems
(\ref{equ:pfeasj}) tailored to (\ref{equ:qcqp'}), reduced further by
setting the ``linear part'' $x_j$ in (\ref{equ:pfeasj}) to 0. So we
conclude that $r^* = 1$ for (\ref{equ:qcqp'}) by applying Theorem
\ref{the:main}. The result $r^* = 1$ also holds for (\ref{equ:qcqp})
because the SDP relaxation (\ref{equ:psdp}) for (\ref{equ:qcqp}) is at
least as strong as the corresponding relaxation for (\ref{equ:qcqp'}),
which we have just proven is exact.
\end{proof}

\noindent Note that the feasibility of the systems (\ref{equ:qcqp'}) can
be checked in polynomial time.

As mentioned in the above proof, system (\ref{equ:pfeasj'}) is a direct
application of (\ref{equ:pfeasj}) to problem (\ref{equ:qcqp'}) with the
following additional restriction: relative to (\ref{equ:pfeasj}), the
term $x_j$ is fixed at 0. Said differently, system (\ref{equ:pfeasj'})
does not include the effects of the linear terms of (\ref{equ:qcqp'}),
e.g., the terms $a_i^T x$, $y$, and $Q_i^T x$. While this may
seem like a major restriction, we will see next---and in Section
\ref{sec:random}---that (\ref{equ:pfeasj'}) retains enough flexibility
to prove that the feasibility number of (\ref{equ:qcqp'}) is indeed $n(m
+ 1)$ for some interesting cases. The key to retaining this flexibility
is actually a consequence of the redundant constraint $x^T x + \sum_i
y_i^T y_i \le (m + 1) \rho^2$ in (\ref{equ:qcqp'}) and its counterpart
in (\ref{equ:pfeasj'}).

Similar to Theorem \ref{the:main}, perturbation can be a
useful tool for broadening the application of Theorem \ref{the:general}
by making feasibility systems like (\ref{equ:pfeasj'}) more likely to be
feasible. For example, a reasonable perturbation might be to replace the
objective of $x^T C x + 2 c^T x$ of (\ref{equ:qcqp'}) with $x^T C x + 2
c^T x + \epsilon \sum_{i=1}^m y_i^T y_i$, where $\epsilon > 0$ is small,
resulting in the analog
\begin{equation} \label{equ:pfeasj''}
    C \bullet X + \epsilon \sum_{i=1}^m I \bullet Y = -1, \ \ \
    D_i \bullet Y_i \le 0, \ \ \
    I \bullet X + \sum_{i = 1}^m I \bullet Y_i \le 0
\end{equation}
of (\ref{equ:pfeasj'}). Note that this particular perturbation is consistent with
the need to satisfy the inequality $I \bullet X + \sum_{i = 1}^m I
\bullet Y_i \le 0$. The following proposition, which proves the general
convex case of (\ref{equ:qcqp}), is an example of this perturbation.

\begin{proposition}[see \cite{Fujie-Kojima_1997}]
Suppose (\ref{equ:qcqp}) satisfies $C \succeq 0$ and $A_i \succeq 0$ for
all $i = 1, \ldots, m$. Then there exists an optimal solution $Y^* :=
Y(x^*, X^*)$ of (\ref{equ:psdp}) with $\rank(Y^*) = 1$.
\end{proposition}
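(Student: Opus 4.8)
The plan is to apply Theorem \ref{the:general} together with the perturbation idea recorded in \eqref{equ:pfeasj''}. Recall that $C$ has been taken diagonal without loss of generality, so $C \succeq 0$ means $C_{jj} \ge 0$ for all $j$; similarly, writing the spectral decomposition $A_i = Q_i D_i Q_i^T$, the hypothesis $A_i \succeq 0$ means that the diagonal matrix $D_i$ satisfies $[D_i]_{jj} \ge 0$ for all $i, j$. The goal is to show that the feasibility number of the lifted diagonal QCQP \eqref{equ:qcqp'} equals its full variable count $n(m+1)$, which by Theorem \ref{the:general} yields $r^* = 1$ for both \eqref{equ:qcqp'} and \eqref{equ:qcqp}.

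First I would handle the strictly positive subcase $C \succ 0$, i.e.\ $C_{jj} > 0$ for all $j$. Here no perturbation is needed: for each choice of free variable $z$ among the $X_{jj}$ and $[Y_i]_{jj}$, I would set all variables of \eqref{equ:pfeasj'} to zero except $z$. The constraints $D_i \bullet Y_i \le 0$ then hold trivially ($0 \le 0$), and the third constraint $I \bullet X + \sum_i I \bullet Y_i \le 0$ reduces to $z \le 0$ or $0 \le 0$ depending on whether $z$ is an $X_{jj}$ or a $[Y_i]_{jj}$. If $z = X_{jj}$, the equation $C \bullet X = -1$ becomes $C_{jj} z = -1$, so $z = -C_{jj}^{-1} < 0$, which is consistent with $z \le 0$; all other constraints are satisfied, so this system is feasible. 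If instead $z = [Y_i]_{jj}$, then $C \bullet X = 0 \ne -1$, so this naive assignment fails — and this is exactly where the perturbation \eqref{equ:pfeasj''} is needed: replacing the objective of \eqref{equ:qcqp'} by $x^T C x + 2 c^T x + \epsilon \sum_i y_i^T y_i$ changes the equation to $C \bullet X + \epsilon \sum_i I \bullet Y_i = -1$, and now setting $z = [Y_i]_{jj} = -\epsilon^{-1} < 0$ (everything else zero) works, since $D_i \bullet Y_i = [D_i]_{jj}(-\epsilon^{-1}) \le 0$ by $[D_i]_{jj} \ge 0$, and $I \bullet X + \sum I \bullet Y_i = -\epsilon^{-1} \le 0$. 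So with the $\epsilon$-perturbation in place, all $n(m+1)$ systems are feasible whenever $C \succ 0$, giving $r^* = 1$ for the perturbed problem.

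Next I would remove both perturbations by a limiting argument identical in structure to the proof of Proposition \ref{pro:convex}. For the general hypothesis $C \succeq 0$ (allowing some $C_{jj} = 0$), first perturb $C$ to $C + \delta I$ with $\delta > 0$ so that $C + \delta I \succ 0$, and also add the $\epsilon \sum_i y_i^T y_i$ term; by the previous paragraph the doubly-perturbed SDP relaxation of the lifted problem has a rank-1 optimal solution, hence so does the original SDP \eqref{equ:psdp} of \eqref{equ:qcqp} via the containment argument in the proof of Theorem \ref{the:general}. Crucially, both perturbations affect only the objective, not the feasible set of \eqref{equ:psdp}, and that feasible set is bounded (Assumptions \ref{ass:feas}--\ref{ass:int}). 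Therefore, letting $\delta \to 0$ and $\epsilon \to 0$, the resulting sequence of rank-1 feasible points for \eqref{equ:psdp} is bounded, admits a limit point $\bar Y$, and by continuity $\bar Y$ is feasible for \eqref{equ:psdp}, has rank at most $1$ (and at least $1$ by its top-left entry), and is optimal for \eqref{equ:psdp} because its objective value is the limit of the perturbed optimal values, which converges to the unperturbed optimal value. This establishes $r^* = 1$.

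The only real subtlety — and the step I would be most careful about — is the bookkeeping in the limiting argument: one must check that the perturbed optimal values converge to the unperturbed optimal value (this follows since the perturbation terms are uniformly bounded on the bounded feasible set and vanish as $\delta, \epsilon \to 0$) and that rank is lower semicontinuous so the limit point has rank $\le 1$. Everything else is a direct, essentially mechanical instantiation of Theorem \ref{the:general} and the perturbation device \eqref{equ:pfeasj''} already developed in the text; no genuinely new idea is required beyond noticing that the redundant trace constraint in \eqref{equ:qcqp'} is precisely what makes the $\epsilon$-perturbation of the objective ``legal'' with respect to the inequality $I \bullet X + \sum_i I \bullet Y_i \le 0$.
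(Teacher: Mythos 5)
Your proposal is correct and takes essentially the same route as the paper's own proof: verify feasibility of the $n(m+1)$ perturbed systems \eqref{equ:pfeasj''} when $C \succ 0$ (with exactly the same variable assignments, using $C_{jj} > 0$ for the $X_{jj}$ systems and $[D_i]_{jj} \ge 0$ for the $[Y_i]_{jj}$ systems), then remove the $\epsilon$- and $\delta$-perturbations by the limiting argument of Proposition \ref{pro:convex}. You merely spell out the limit bookkeeping that the paper compresses into ``the case $C \succeq 0$ is just a limiting case,'' so no substantive difference.
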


\begin{proof}
First assume $C \succ 0$. Using the suggested perturbation, we need
to show all such systems (\ref{equ:pfeasj''}) are feasible. For the
system with $X_{jj}$ free, set all other variables to 0 so that
(\ref{equ:pfeasj''}) reduces to $C_{jj} X_{jj} = -1$ and $X_{jj} \le
0$, which is solvable because $C_{jj} > 0$. On the other hand, for the
systems with $[Y_i]_{jj}$ free, set all other variables to 0. Then
(\ref{equ:pfeasj''}) becomes $\epsilon [Y_i]_{jj} = -1$, $[D_i]_{jj}
[Y_i]_{jj} \le 0$, and $[Y_i]_{jj} \le 0$, which is also solvable
because $[D_i]_{jj} \ge 0$. Hence the feasibility number is $n(m+1)$ as
desired. The case $C \succeq 0$ is just a limiting case of $C \succ 0$.
\end{proof}

\section{Random General QCQPs} \label{sec:random}

In this section, we study the behavior of $r^*$ for (\ref{equ:qcqp})
under the assumption that $C$ is positive semidefinite and the $A_i$
are generated randomly. The analysis is an extension of the ideas
of Section \ref{sec:general}, and it does not depend on $c, a_i$,
or $b$, although these data are required for satisfying Assumptions
\ref{ass:feas}--\ref{ass:int}. Our result is as follows:

\begin{theorem} \label{the:random}
Regarding the general QCQP (\ref{equ:qcqp}), suppose that $C$ is
positive semidefinite, and for each $i = 1,\ldots,m$, $A_i$ is generated
randomly with eigenvalues independently following the standard
Gaussian distribution. Suppose also that $c, a_1, \ldots, a_m$, and
$b$ are chosen independently such that Assumptions \ref{ass:feas} and
\ref{ass:int} are satisfied. Finally, for any finite $\rho > 0$, add the
constraint $x^T x \le \rho^2$ to ensure that Assumption \ref{ass:pd}
is satisfied, while not violating Assumptions \ref{ass:feas} and
\ref{ass:int}. Then, if $m \le n^q$ for a fixed positive integer $q$, $\text{Prob}(r^* =
1) \to 1$ as $n \to \infty$.
\end{theorem}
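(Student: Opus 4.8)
The plan is to apply Theorem~\ref{the:general} to the lifted diagonal QCQP (\ref{equ:qcqp'}), and to show that, with probability tending to $1$, each of the $n(m+1)$ feasibility systems (\ref{equ:pfeasj'}) is feasible, so that $r^*=1$. Adding the constraint $x^Tx\le\rho^2$ (with data $A_{m+1}=I$, $D_{m+1}=I$) ensures Assumptions \ref{ass:feas}--\ref{ass:int}, so Theorem~\ref{the:general} is applicable; note the deterministic objective matrix $C\succeq 0$ and the deterministic constraint $I$ will play the role of ``anchors'' that make the systems easy to satisfy. Since $C\succeq 0$ is diagonal with entries $C_{11}\ge\cdots\ge C_{nn}\ge 0$, and since each $A_i$ has i.i.d.\ standard Gaussian eigenvalues $[D_i]_{11},\ldots,[D_i]_{nn}$, the sign pattern of the diagonal entries $[D_i]_{kk}$ is what governs feasibility of (\ref{equ:pfeasj'}).

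First I would analyze the two families of systems separately. For a system with some $[Y_i]_{jj}$ free: set all variables to $0$ except $[Y_i]_{jj}$ and the diagonal entries of $X$. The equation $C\bullet X=-1$ must be satisfied using only the free entries of $X$ (which are nonnegative) minus\dots\ wait, $C\succeq 0$ so $C\bullet X\ge 0$ if $X$ is nonnegative-diagonal; hence we need $X$ to have a \emph{negative} entry, but in these systems every $X_{kk}$ is constrained nonnegative. This is the obstacle: with $z=[Y_i]_{jj}$, the equation $C\bullet X=-1$ cannot be met by any feasible $X$ unless $C$ has a zero diagonal entry. So the clean statement of Theorem~\ref{the:general} does not directly apply; instead I would use the perturbed version, replacing the objective $x^TCx+2c^Tx$ by $x^TCx+2c^Tx+\epsilon\sum_i y_i^Ty_i$ as in (\ref{equ:pfeasj''}), and let $\epsilon\to 0$ at the end via the now-standard limiting argument (boundedness of the feasible set of (\ref{equ:psdp}), perturbation affects only the objective). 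With the perturbed equation $C\bullet X+\epsilon\sum_i I\bullet Y_i=-1$, a system with $z=[Y_i]_{jj}$ free is solved by setting $z=-\epsilon^{-1}$, all other variables $0$, provided $[D_i]_{jj}z\le 0$, i.e.\ provided $[D_i]_{jj}\ge 0$; and the inequality $I\bullet X+\sum I\bullet Y_\ell\le 0$ reads $z\le 0$, which holds.

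The remaining work is the probabilistic heart. For a system with $z=X_{jj}$ free: set all $Y_i=0$ and all $X_{kk}=0$ for $k\ne j$ except possibly one extra index; we need $C_{jj}X_{jj}=-1$ solvable with $X_{jj}$ free, which needs only $C_{jj}\ne 0$, handled by a further objective perturbation of $C$ if some $C_{jj}=0$; the inequality $I\bullet X\le 0$ is $X_{jj}\le 0$, consistent with $C_{jj}>0$. So the $X_{jj}$-systems are feasible deterministically (after perturbation). For a system with $z=[Y_i]_{jj}$, feasibility holds (by the previous paragraph) exactly when $[D_i]_{jj}\ge 0$. Hence, conditioned on the event $\mathcal E:=\bigcap_{i,j}\{[D_i]_{jj}\ge 0\}$, the feasibility number equals $n(m+1)$ and $r^*=1$. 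But $\Prob([D_i]_{jj}\ge 0)=\tfrac12$ for each of the $nm$ independent Gaussians, so $\Prob(\mathcal E)=2^{-nm}\to 0$ --- too weak. The fix is to not require all $[Y_i]_{jj}$-systems to fix \emph{only} $[Y_i]_{jj}$: when $[D_i]_{jj}<0$, I would instead solve the $[Y_i]_{jj}$-system by also activating another diagonal entry $[Y_i]_{kk}$ of the same $Y_i$ (which is a free-sign choice among nonnegative variables) with $[D_i]_{kk}>0$, using $[D_i]_{kk}[Y_i]_{kk}+[D_i]_{jj}[Y_i]_{jj}\le 0$ and $[Y_i]_{kk}-\text{(cancellation via }X\text{ or the redundant ball)}$; concretely, set $[Y_i]_{jj}=-\epsilon^{-1}$, $[Y_i]_{kk}=t$ with $t\ge 0$ large enough that $[D_i]_{kk}t+[D_i]_{jj}(-\epsilon^{-1})\le 0$, and absorb the resulting positive contribution $\epsilon([Y_i]_{kk}+[Y_i]_{jj})$ to the equation by a compensating nonnegative $X_{\ell\ell}$ with $C_{\ell\ell}>0$ (choosing $\ell$ with largest $C_{\ell\ell}$) --- here the redundant trace inequality gives room because its left side is $\le 0$ is \emph{not} required to be tight. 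The upshot: the $i$-th block of $Y$ admits a good assignment for \emph{all} its $n$ free-variable choices as long as $D_i$ has \emph{at least one} positive diagonal entry, an event of probability $1-2^{-n}$. By independence across $i$, $\Prob(\text{every }D_i\text{ has a positive entry})\ge(1-2^{-n})^m\ge 1-m2^{-n}\ge 1-n^q2^{-n}\to 1$. On this event the feasibility number of (the perturbed) (\ref{equ:qcqp'}) is $n(m+1)$, so Theorem~\ref{the:general} gives $r^*=1$; removing the two nested objective perturbations ($\epsilon\to 0$ and $C$-perturbation$\to 0$) by the limiting argument of Proposition~\ref{pro:convex} completes the proof.

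The main obstacle, then, is not the probability estimate (a union bound over $m\le n^q$ events each of exponentially small failure probability suffices) but the combinatorial design of a single feasible point for each $[Y_i]_{jj}$-system using only one guaranteed-positive eigenvalue per block $D_i$; getting the signs and the redundant inequality to cooperate --- in particular verifying $I\bullet X+\sum_\ell I\bullet Y_\ell\le 0$ while the equation forces a negative left-hand side somewhere --- is the step that needs care, and is exactly where the redundant ball constraint $x^Tx+\sum y_i^Ty_i\le(m+1)\rho^2$ built into (\ref{equ:qcqp'}) earns its keep.
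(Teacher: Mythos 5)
Your reduction to the lifted diagonal problem (\ref{equ:qcqp'}), the switch to the perturbed systems (\ref{equ:pfeasj''}), the treatment of the $X_{jj}$-systems, and the case $[D_i]_{jj}\ge 0$ all match the paper. But the step you yourself flagged as the heart of the matter---the construction for a $[Y_i]_{jj}$-system when $[D_i]_{jj}<0$---is wrong, and the event you base the probability estimate on (``every $D_i$ has at least one positive diagonal entry'') is not sufficient. First, the sign is backwards: the equation forces $u:=[Y_i]_{jj}<0$ (since $C\succeq0$ and every other variable enters with a nonnegative coefficient), so $[D_i]_{jj}u>0$, and activating a nonnegative $[Y_i]_{kk}$ with $[D_i]_{kk}>0$ only \emph{increases} $D_i\bullet Y_i$; compensation requires $[D_i]_{kk}<0$. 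Likewise, ``absorbing'' the extra contribution to the equation with a nonnegative $X_{\ell\ell}$ having $C_{\ell\ell}>0$ pushes $C\bullet X+\epsilon\sum_\ell I\bullet Y_\ell$ upward, away from $-1$. Second, once the bookkeeping is done correctly, the system at one specific index cannot be repaired at all under your $\epsilon I$ perturbation: take $j=\arg\min_k [D_i]_{kk}$ and suppose $[D_i]_{jj}<0$ (with probability $1-2^{-n}$ the minimum of the $n$ Gaussian eigenvalues is negative, and it is unique a.s.). The equation forces $u<0$, the trace inequality $I\bullet X+\sum_\ell I\bullet Y_\ell\le 0$ forces $\sum_{k\ne j}[Y_i]_{kk}\le |u|$, and hence
\[
D_i\bullet Y_i \;\ge\; -[D_i]_{jj}|u| \;+\; \min\{0,\textstyle\min_{k\ne j}[D_i]_{kk}\}\,|u|
\;=\;|u|\bigl(\min\{0,\textstyle\min_{k\ne j}[D_i]_{kk}\}-[D_i]_{jj}\bigr)\;>\;0,
\]
so that system is \emph{infeasible}, not merely unhandled by your assignment. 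Consequently, with probability tending to one the feasibility number of your perturbed lifted problem is strictly below $n(m+1)$, Theorem~\ref{the:general} cannot be invoked, and your union bound certifies nothing; the event you would actually need for your per-system constructions (every $D_i$ has nonnegative minimum eigenvalue) has probability $2^{-nm}\to 0$.

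For comparison, the paper does not try to exhibit one deterministic feasible point per system. It perturbs with \emph{random} diagonal matrices $B_i$ (i.i.d.\ uniform entries) rather than $\epsilon I$, reduces each $[Y_k]_{jj}$-system (\ref{equ:local1}) to the three-equation system (\ref{equ:local2}), argues that each of the $\binom{n-1}{2}$ basic solutions is feasible with a universal constant probability $\alpha>0$, and then drives the per-system failure probability $(1-\alpha)^{\binom{n-1}{2}}$ down fast enough (via the lemma $p^q\log(1-\beta^p)\to 0$) to survive the product over all $nm\le n^{q+1}$ systems. So the probabilistic work happens at the level of random $3\times 3$ basic solutions, not at the level of a single guaranteed-positive eigenvalue per block. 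Note, moreover, that the obstruction displayed above is structural---it uses only $C\succeq 0$, nonnegative perturbation coefficients on the $Y$ variables, and the trace inequality---so the index attaining the negative minimum eigenvalue of each $D_i$ is exactly where any argument of this type (including the paper's dependence/independence bookkeeping across bases) must be examined with care; it is precisely the step you anticipated would be delicate, and it is where your proposal breaks.
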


\noindent The proof will make use of the following lemma:

\begin{lemma}
Let $\beta \in (0,1)$, and let $p, q$ be positive integers. Then
$
    \lim_{p \to \infty} p^q \log(1 - \beta^p) = 0,
$
where $\log$ is the natural logarithm.
\end{lemma}

\begin{proof}
Consider the change of variables $x = 1/p$ so that the limit becomes
\[
    \lim_{x \to 0^+} \frac{\log(1 - \beta^{1/x})}{(1/x)^q} = 0,
\]
which, by l'H\^{o}pital's rule, equals
\[
    \lim_{x \to 0^+} \frac{-\log(\beta) \beta^{1/x} x^{q-1}}{q(1 - \beta^{1/x})} = 0.
\]
\end{proof}

\begin{proof}[Proof of Theorem \ref{the:random}]
We analyze the situation when $C \succ 0$, as $C \succeq 0$ is just a
limiting case. Without loss of generality, after a change of variables,
we may assume that $C$ is diagonal. Following the development in Section
\ref{sec:general}, our randomly generated problem (\ref{equ:qcqp}) with
the added constraint $x^T x \le \rho^2$ is equivalent to (\ref{equ:qcqp'}).
We claim that $r^* = 1$ for (\ref{equ:qcqp'}) with high probability,
and since the SDP relaxation for (\ref{equ:qcqp'}) is at least as tight
as (\ref{equ:psdp}) for (\ref{equ:qcqp}), this will prove $r^* = 1$ for
(\ref{equ:qcqp}) with high probability as desired.

To prove the claim, we analyze a perturbation of (\ref{equ:qcqp'}).
For each $i$, let $B_i \in \SSS^n$ be a diagonal matrix with diagonal
entries independently following the uniform distrubtion on $[0,1]$, and
for $\epsilon > 0$ small, consider the perturbed problem
\begin{align}
\min \ \ & x^T C x + 2 c^T x + \epsilon \textstyle{\sum_i} y_i^T B_i y_i \label{equ:qcqp''} \\
\st \ \ & y_i^T D_i y_i + 2 a_i^T x \le b_i, \ y_i = Q_i^T x \nonumber \\
& x^T x + \sum_i y_i^T y_i \le (m + 1)\rho^2. \nonumber
\end{align}
Analogous to systems (\ref{equ:pfeasj''}) in Section \ref{sec:general},
we analyze the feasibility of systems of the
form
\begin{align}
& C \bullet X + \epsilon \textstyle{\sum_i} B_i \bullet Y_i = -1 \label{equ:local1} \\
& D_i \bullet Y_i \le 0 \ \forall \ i \nonumber \\
& I \bullet X + \textstyle{\sum_i} I \bullet Y_i \le 0 \nonumber
\end{align}
where all matrices $X$, $Y_i$ are diagonal and all variables are
nonnegative except for one, which is free.

First consider the case of (\ref{equ:local1}) when $X_{jj}$ is free. Set
all $Y_i = 0$ so that the system reduces to $C \bullet X = -1$ and $I
\bullet X \le 0$, which is feasible since $C_{jj} > 0$ by assumption.

Second, consider the case of (\ref{equ:local1}) when $[Y_k]_{jj}$ is
free. Set $X = 0$ and all other $Y_i = 0$ so that the system reduces
to $\epsilon B_k \bullet Y_k = -1$, $D_k \bullet Y_k \le 0$, and $I
\bullet Y_k \le 0$, which is certainly feasible if the following
equality system is feasible:
\begin{align}
& B_k \bullet Y_k = -1 \label{equ:local2} \\
& D_k \bullet Y_k = 0 \nonumber \\
& I \bullet Y_k = 0. \nonumber
\end{align}
Note that (\ref{equ:local2}) does not depend on $\epsilon$. The basis
size for (\ref{equ:local2}) is 3, and due to the random nature of
the data, every $3 \times 3$ basis matrix is invertible. Also, because
$[Y_k]_{jj}$ is free while all other variables in $Y_k$ are nonnegative,
the system (\ref{equ:local2}) has $n -1 \choose 2$ bases, and hence,
$n-1 \choose 2$ basic solutions.

Let $\alpha$ be the probability of any particular basic solution of
(\ref{equ:local2}) being feasible. In particular, $\alpha$ is the same
for every basic solution since the entries of $B_k$ are generated
independently and identically and similarly for the entries of $D_k$.
Hence, $\alpha$ is equal to the probability that the solution $(\bar
y_1, \bar y_2, \bar y_3)$ of the random system
\begin{align*}
    b_1 y_1 + b_2 y_2 + b_3 y_3 &= -1 \\
    d_1 y_1 + d_2 y_2 + d_3 y_3 &= 0 \\
    y_1 + y_2 + y_3 &= 0
\end{align*}
satisfies $\bar y_2 \ge 0$ and $\bar y_3 \ge 0$, where $b_1, b_2, b_3$
are i.i.d.~uniform in $[0,1]$ and $d_1, d_2, d_3$ are i.i.d.~standard
normal. Observing that the specific realizations $(0.5, 0.5, 0.4)$ and
$(0, -1, 1)$ of $b$ and $d$, respectively, yield $(\bar y_1, \bar y_2,
\bar y_3) = (-20, 10, 10)$, we conclude that there is an open set of
realizations $(b, d)$ satisfying $\bar y_1 > 0$ and $\bar y_2 > 0$.
Hence, the probability of a realization occurring in this open set is
positive, which in turn ensures $\alpha > 0$. Note also that $\alpha$
is independent of $n$ and $m$. (Indeed, empirically we can verify using
Monte Carlo simulation that $\alpha \approx 1/6$.)

Next, due to independence, the probability that (\ref{equ:local2}) is
feasible, i.e., there exists at least one basic feasible solution, is
\[
    \theta := 1 - \left(1 - \alpha\right)^{n - 1 \choose 2}.
\]
Thus, $\theta$ is also a lower bound on the probability of the
feasibility of system (\ref{equ:local1}) when $[Y_k]_{jj}$ is free. To
ensure $r^* = 1$ for (\ref{equ:qcqp''}), we need that all such systems
(\ref{equ:local1}) are feasible. Again exploiting independence, this
occurs with probability at least $\theta^{nm}$. We claim that $\lim_{n
\to \infty} \theta^{mn} = 1$, which is certainly true if
\begin{equation} \label{equ:local3}
    0 = \lim_{n \to \infty} m n \log(\theta) = \lim_{n \to \infty} m n
    \log(1 - (1 - \alpha)^{n-1 \choose 2}).
\end{equation}
Define $\beta := 1 - \alpha$. Since $m \le n^q$ and $\beta \in (0,1)$,
we have
\[
0 \le -m n \log(1 - \beta^{n-1 \choose 2}) \le 
-n^{q+1} \log(1 - \beta^n).
\]
The above lemma thus implies
\[
    0 \le \lim_{n \to \infty} -m n \log(1 - \beta^{n-1 \choose 2}) \le 
    \lim_{n \to \infty} -n^{q+1} \log(1 - \beta^n) = 0,
\]
which proves (\ref{equ:local3}).

Finally, since the above probability analysis does not depend on
$\epsilon$, we may take $\epsilon \to 0$ so that the probability
analysis applies as well to problem (\ref{equ:qcqp'}), which proves the
claim, i.e., that $r^* = 1$ for (\ref{equ:qcqp'}) with high probability.
\end{proof}

Although Theorem \ref{the:random} assumes that $C \succeq 0$, we
conjecture that is true even when $C$ is generated randomly in the
same manner as the $A_i$ matrices. The proof seems to break down
for analyzing the relevant feasibility system for $X_{jj}$, which
corresponds to the smallest eigenvalue $C_{jj}$ of $C$, when that
$C_{jj}$ is negative. A possible work-around could be to put the
objective $x^T C x + 2 c^T x$ into the constraints using an auxiliary
variable $t$ via the constraint $x^T C x + 2 c^T x \le t$ and then to
minimize $t$. However, $t$ would need to be bounded in accordance with
Assumptions \ref{ass:feas}--\ref{ass:int} before applying the theory we
have developed.

\end{onehalfspace}

\section*{Acknowledgements}

We are in debt to the anonymous associate editor and two referees, who
suggested many positive improvements to the paper. We would also like to
thank Gang Luo, who pointed out an error in the knapsack example.

\bibliographystyle{abbrv}
\bibliography{paper}

\end{document}